\theoremstyle{plain}
\newtheorem{theorem}{Theorem}
\newtheorem{lemma}[theorem]{Lemma}
\newtheorem{proposition}[theorem]{Proposition}
\theoremstyle{remark}
\def\Z{{\mathbb Z}}
\def\R{{\mathbb R}}
\newcommand{\T}{\mathbb{T}}
\DeclareMathOperator{\sinc}{sinc}
\newcommand{\ddt}{\left(\!\frac{d}{dt}\!\right)}
\newcommand{\ddx}{\left(\!\frac{d}{dx}\!\right)}
\begin{document}

 \title[On the constant in a transference inequality]{On the constant in a transference inequality for the vector-valued Fourier transform}
 \author{Dion Gijswijt \& Jan van Neerven }
 \address{Delft University of Technology \\ Faculty EEMCS/DIAM \\ P.O. Box 5031 \\ 2600 GA Delft \\ The Netherlands}
 \email{D.C.Gijswijt/J.M.A.M.vanNeerven@TUDelft.nl}
 \date{\today}
 \keywords{Hausdorff-Young inequality, Fourier type, transference}
 \subjclass[2000]{42B10 (46B20, 46E40)}

\begin{abstract} The standard proof of the equivalence of Fourier type on $\R^d$ and on the torus $\T^d$ is usually stated 
in terms of an implicit constant, defined as the minimum of a sum of powers of sinc functions. In this note
we compute this minimum explicitly.  
\end{abstract}

\maketitle

\section{Introduction}

The motivation of this paper comes from a well-known transference result for the vector-valued Fourier transform.
Let $X$ be a complex Banach space. The {\em Fourier transform} of a function $f\in L^1(\R^d;X)$ is defined
by $$\mathscr{F}_{\R^d}f(\xi) := \int_{\R^d} e^{-2\pi ix\cdot \xi}f(x)\,dx, \quad \xi\in \R^d.$$
Likewise, the {\em Fourier transform} of a function $f\in { L^1(\T^d;X)}$ is defined by
$$ \mathscr{F}_{ \T^d}f({ k}) := { \int_{\T^d}} e^{-2\pi i k\cdot t} f{(t)\,dt}, \quad { k\in \Z^d}.$$
 
\begin{proposition}\label{prop:1}
Let $X$ be a complex Banach space, fix $d\ge 1$ and $p\in (1, 2]$, and let $\frac1p+\frac1q = 1$.
The following assertions are equivalent:
\begin{enumerate}
\item[(i)] 
$\mathscr{F}_{\R^d}$ extends to a bounded operator from $L^p(\R^d;X)$ into $L^q(\R^d;X)$;
\item[(ii)]  
$\mathscr{F}_{\T^d}$ extends to a bounded operator from $L^p(\T^d;X)$ into $\ell^q(\Z^d;X)$.
\end{enumerate}
In this situation, denoting the norms of these extensions by $\varphi_{p,X}(\R^d)$ and $\varphi_{p,X}(\T^d)$, we have
\begin{align*}
   \varphi_{p,X}(\R^d)  \leq \varphi_{p,X}(\T^d) \leq C_{q}^{-d/q} \varphi_{p,X}(\R^d), \end{align*}
where $C_{q}$ is the global minimum of the periodic function
$$ x\mapsto \sum_{m \in \Z} \Big|\frac{\sin(\pi(x+m))}{\pi(x+m)}\Big|^{q}, \quad x\in\R.$$ 
\end{proposition}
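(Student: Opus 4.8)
The plan is to prove the two displayed inequalities separately, each first for a dense class and then extended by density: trigonometric polynomials $f=\sum_{k\in\Z^d}c_k\,e^{2\pi i k\cdot(\cdot)}$ on $\T^d$ (so that $\mathscr F_{\T^d}f(k)=c_k$), and $C_c$– or Schwartz functions on $\R^d$. The lower bound will yield (ii)$\Rightarrow$(i) and the upper bound (i)$\Rightarrow$(ii).

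For $\varphi_{p,X}(\R^d)\le\varphi_{p,X}(\T^d)$ I would use a dilation-plus-Riemann-sum argument. The Fourier-type constant is invariant under dilations of the group, so the torus of side $R$, namely $\T_R^d=(\R/R\Z)^d$, carries the same constant $\varphi_{p,X}(\T^d)$. Given $f\in C_c(\R^d;X)$ supported in $(-R/2,R/2)^d$, I regard $f$ as an element of $L^p(\T_R^d;X)$; its Fourier coefficients are $R^{-d}\mathscr F_{\R^d}f(k/R)$, $k\in\Z^d$. Inserting these into the Fourier-type inequality on $\T_R^d$ and tracking the normalising powers of $R$, the left-hand side becomes $\bigl(R^{-d}\sum_{k}\|\mathscr F_{\R^d}f(k/R)\|^q\bigr)^{1/q}$, a Riemann sum for $\|\mathscr F_{\R^d}f\|_{L^q(\R^d;X)}$ over the lattice $R^{-1}\Z^d$, while the right-hand side converges to $\varphi_{p,X}(\T^d)\|f\|_{L^p(\R^d;X)}$. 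Letting $R\to\infty$ gives the bound with constant $1$.

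For $\varphi_{p,X}(\T^d)\le C_q^{-d/q}\varphi_{p,X}(\R^d)$ I would transfer in the opposite direction by windowing. Set $g:=f\cdot\mathbf 1_{Q}$ on $\R^d$, where $Q=[-\tfrac12,\tfrac12]^d$ and $f$ is extended periodically. Then $\|g\|_{L^p(\R^d;X)}=\|f\|_{L^p(\T^d;X)}$, and since $\widehat{\mathbf 1_{Q}}(\xi)=\prod_{j}\sinc(\xi_j)$ one finds $\mathscr F_{\R^d}g(\xi)=\sum_k c_k\prod_j\sinc(\xi_j-k_j)$; in particular $\mathscr F_{\R^d}g(k)=c_k$ for all $k\in\Z^d$. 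Applying assumption (i) to $g$ gives $\|\mathscr F_{\R^d}g\|_{L^q(\R^d;X)}\le\varphi_{p,X}(\R^d)\|f\|_{L^p(\T^d;X)}$, so it remains to bound $\|(c_k)\|_{\ell^q}=\|(\mathscr F_{\R^d}g(k))_k\|_{\ell^q}$ by $C_q^{-d/q}\|\mathscr F_{\R^d}g\|_{L^q}$. This is exactly the vector-valued sampling (Plancherel–Pólya-type) inequality
\[
\int_{\R^d}\|F(\xi)\|^q\,d\xi\ \ge\ C_q^{\,d}\sum_{k\in\Z^d}\|F(k)\|^q
\]
for every $X$-valued $F$ whose inverse Fourier transform is supported in $Q$ (which $F=\mathscr F_{\R^d}g$ is). I would prove it by Fubini, slicing one coordinate at a time and reducing to the one-dimensional estimate $\int_\R\|F\|^q\ge C_q\sum_k\|F(k)\|^q$ for band-limitation to $[-\tfrac12,\tfrac12]$; iterating over the $d$ coordinates produces the factor $C_q^{d}$. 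In one dimension I would write $\int_\R\|F\|^q=\int_0^1\sum_{k}\|F(s+k)\|^q\,ds$ and use the Shannon expansion $F(s+k)=\sum_{m}F(k-m)\,\sinc(s+m)$, so that the relevant scalar quantity at shift $s$ is the sinc-mass $\sum_m|\sinc(s+m)|^q$, whose global minimum over $s$ is precisely $C_q$ — this is how the constant enters.

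I expect the genuine obstacle to be this sampling inequality with the \emph{sharp} constant $C_q$ rather than a crude one. The reproducing kernel $\sinc$ fails to lie in $\ell^1(\Z)$, so a naive Young's-inequality bound diverges, and one must use the band-limitation (equivalently, the exponential-type structure of $F$) to close the estimate. In the Banach-space-valued setting the additional difficulty is that Plancherel and squaring are unavailable, so I would run the argument entirely through norms and the triangle inequality, the only scalar input being the $X$-independent sum $\sum_m|\sinc(s+m)|^q$; this keeps the same constant $C_q$. Combining the three estimates then yields $\|(c_k)\|_{\ell^q}\le C_q^{-d/q}\varphi_{p,X}(\R^d)\|f\|_{L^p(\T^d;X)}$, establishing (i)$\Rightarrow$(ii) together with the stated upper bound.
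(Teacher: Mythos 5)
First, a point of order: the paper does not prove Proposition~\ref{prop:1} at all --- it is quoted from \cite{Kwa,Gar,Kon}, and the present note only locates the minimum $C_q$ --- so your proposal must be judged on its own merits. Your first half (the bound $\varphi_{p,X}(\R^d)\le\varphi_{p,X}(\T^d)$ via dilation and Riemann sums) is the standard argument and is fine, as is the windowing identity $\mathscr{F}_{\R^d}(f\mathbf{1}_Q)(k)=c_k$. The problem is the second half. You reduce everything to the sharp vector-valued sampling inequality $\int_{\R^d}\|F\|^q\,d\xi\ge C_q^d\sum_k\|F(k)\|^q$ for $F$ band-limited to $Q$, you correctly observe that the obvious attack fails because $(\sinc(\pi(s+m)))_{m\in\Z}\notin\ell^1(\Z)$ --- and then you stop exactly there. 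The sketch you do give cannot work: expanding $F(s+k)=\sum_m F(k-m)\sinc(\pi(s+m))$ and applying the triangle inequality can only bound $\sum_k\|F(s+k)\|^q$ from \emph{above} in terms of the integer samples (and even that upper bound diverges, for the $\ell^1$ reason you name), whereas you need a bound from \emph{below}; there is no reverse Young inequality that extracts $\sum_m|\sinc(\pi(s+m))|^q$ as a multiplicative lower-bound factor of a convolution. In the scalar case such sampling inequalities are rescued by Plancherel--P\'olya theory, which rests on $L^2$ orthogonality or on the boundedness of the discrete Hilbert transform; neither is available for a general Banach space $X$, and circumventing exactly this is the point of the transference theorem. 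So the crux of (i)$\Rightarrow$(ii) is a genuine gap, not a verification you have merely postponed.

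The constant $C_q=\min_x\sum_m|\sinc(\pi(x+m))|^q$ enters the classical argument in a different and much softer way: one applies the $\R^d$-inequality not to $f\mathbf{1}_Q$ but to the step function $g:=\sum_k c_k\mathbf{1}_{k+Q}$. Then $\|g\|_{L^p(\R^d;X)}=\|(c_k)\|_{\ell^p(\Z^d;X)}$ by disjointness of supports, while $\mathscr{F}_{\R^d}g(\xi)=f(-\xi)\prod_j\sinc(\pi\xi_j)$, so that periodizing $\|\mathscr{F}_{\R^d}g\|^q$ gives
\begin{equation*}
\|\mathscr{F}_{\R^d}g\|_{L^q(\R^d;X)}^q=\int_{[0,1]^d}\|f(-x)\|^q\prod_{j=1}^d\Bigl(\sum_{m\in\Z}|\sinc(\pi(x_j+m))|^q\Bigr)dx\ \ge\ C_q^{\,d}\,\|f\|_{L^q(\T^d;X)}^q,
\end{equation*}
the minimum now appearing as a \emph{pointwise} lower bound for a weight, with no sampling inequality needed. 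This yields $C_q^{d/q}\|f\|_{L^q(\T^d;X)}\le\varphi_{p,X}(\R^d)\,\|(c_k)\|_{\ell^p(\Z^d;X)}$, i.e.\ Fourier type for the dual group $\Z^d$, which is converted into the stated bound for $\mathscr{F}_{\T^d}$ by the standard duality between the Fourier type constants of a group and of its dual group (see \cite{Gar}). If you wish to keep your window-plus-sampling architecture instead, you must actually prove the sampling inequality with constant $C_q^{\,d}$ for arbitrary $X$; as written, that step is missing.
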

This function, as well as several others considered below, have removable singularities. It is understood that we 
will always be working with their unique continuous extensions.

A complex Banach space $X$ which has the equivalent properties (i) and (ii) is said to have {\em Fourier type $p$}; 
this notion has been introduced in 
\cite{Pee}.
Proposition~\ref{prop:1} goes back to \cite{Kwa}; in its stated form the result can be found in \cite{Gar, Kon}. Related results
may be found in \cite{Bou}. These
references do not comment on the location of the global minimum. A quick computer plot (see Figure \ref{fig1}) 
suggests that the minimum is taken in the points
$\frac12 + \Z$. To actually {\em prove} this turns out to be surprisingly difficult. This is the modest objective of the present note:

\begin{proposition}\label{prop} For every real number $r\ge 1$, 
the function $f_r:[0,1]\to \R$ defined by  
$$f_r(x):= \sum_{m \in \Z} {\Big|}\frac{\sin(\pi(x+m))}{\pi(x+m)}{\Big|}^{2r}, \quad x\in [0,1],$$
has a global minimum at $x=\frac12$.
\end{proposition}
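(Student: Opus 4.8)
The plan is to reduce the proposition to a one-variable monotonicity property and then to a single analytic inequality, with the last step being the delicate one.

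Since $\sin(\pi(x+m))=(-1)^m\sin(\pi x)$, one has $|\sin(\pi(x+m))|=|\sin(\pi x)|$ for every $m\in\Z$, so for $x\in(0,1)$
\begin{equation*}
f_r(x)=\frac{\sin^{2r}(\pi x)}{\pi^{2r}}\,H_{2r}(x),\qquad
H_s(x):=\sum_{m\in\Z}\frac1{|x+m|^{s}}=\zeta(s,x)+\zeta(s,1-x),
\end{equation*}
where $\zeta(\cdot,\cdot)$ is the Hurwitz zeta function, while $f_r(0)=f_r(1)=1$. The function $f_r$ extends to a smooth, even, $1$-periodic function, and $f_r(1-x)=f_r(x)$, so $f_r$ is symmetric about $x=\tfrac12$; hence it suffices to show that $f_r$ is non-increasing on $[0,\tfrac12]$. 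Writing $s:=2r\ge 2$ and taking a logarithmic derivative, $f_r'/f_r=s\pi\cot(\pi x)+(\log H_s)'$; since $H_s$ is convex and symmetric about $\tfrac12$, it is decreasing on $(0,\tfrac12)$, and the condition $f_r'\le 0$ on $(0,\tfrac12)$ is equivalent to
\begin{equation}
-(\log H_s)'(x)\ \ge\ s\,\pi\cot(\pi x),\qquad x\in(0,\tfrac12),\ \ s\ge 2. \tag{$\dagger$}
\end{equation}

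The key point is that $(\dagger)$ is an \emph{equality} at $s=2$: by the classical identity $H_2(x)=\pi^2/\sin^2(\pi x)$ we have $-(\log H_2)'(x)=2\pi\cot(\pi x)$, which is exactly the degenerate case $f_1\equiv 1$. (For the same reason $(\dagger)$ fails when $s<2$, so the hypothesis $r\ge 1$ is sharp.) Setting $\lambda_s(x):=-(\log H_s)'(x)$, so that $\lambda_2(x)=2\pi\cot(\pi x)$, it therefore suffices to prove that $s\mapsto \lambda_s(x)/s$ is non-decreasing on $[2,\infty)$ for each fixed $x\in(0,\tfrac12)$. To analyse this, use the integral representations (for $x\in(0,\tfrac12)$, with $v:=\tfrac12-x$), which follow from $\zeta(s,a)=\Gamma(s)^{-1}\int_0^\infty t^{s-1}e^{-at}(1-e^{-t})^{-1}\,dt$:
\begin{equation*}
H_s(x)=\frac1{\Gamma(s)}\int_0^\infty\frac{t^{s-1}\cosh(vt)}{\sinh(t/2)}\,dt,\qquad
-H_s'(x)=\frac1{\Gamma(s)}\int_0^\infty\frac{t^{s}\sinh(vt)}{\sinh(t/2)}\,dt .
\end{equation*}
If $\tilde\rho_s$ is the probability density on $(0,\infty)$ proportional to $t^{s-1}\cosh(vt)/\sinh(t/2)$, these give $\lambda_s(x)=\mathbb{E}_{\tilde\rho_s}\!\big[t\tanh(vt)\big]$, and a direct differentiation in $s$ yields
\begin{equation*}
\frac{d}{ds}\log\frac{\lambda_s(x)}{s}
=\frac{\mathrm{Cov}_{\tilde\rho_s}\!\big(t\tanh(vt),\,\log t\big)}{\mathbb{E}_{\tilde\rho_s}\!\big[t\tanh(vt)\big]}-\frac1s .
\end{equation*}

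So the entire proof comes down to the estimate
\begin{equation}
\mathrm{Cov}_{\tilde\rho_s}\!\big(t\tanh(vt),\,\log t\big)\ \ge\ \frac1s\,\mathbb{E}_{\tilde\rho_s}\!\big[t\tanh(vt)\big],\qquad s\ge 2,\ \ x\in(0,\tfrac12). \tag{$\ast$}
\end{equation}
Since $t\mapsto t\tanh(vt)$ and $t\mapsto\log t$ are both strictly increasing, the covariance on the left is automatically positive; the step I expect to be the main obstacle is to turn this into the \emph{quantitative} bound $(\ast)$, uniformly in $s\ge 2$ and $x\in(0,\tfrac12)$. This requires a careful study of the family $\tilde\rho_s$ — equivalently, of the Dirichlet-type sums $\sum_{n\ge 0}\big[(n+x)^{-\sigma}\mp(n+1-x)^{-\sigma}\big]$ encoded in the integrals above, together with the log-convexity of $\sigma\mapsto\zeta(\sigma,x)+\zeta(\sigma,1-x)$. (For \emph{integer} $r$ there is a shortcut, since $f_r$ is then a polynomial of degree $r-1$ in $\cos(2\pi x)$ whose coefficients are built from the values $B_{2r}(0),\dots,B_{2r}(r-1)$ of the cardinal $B$-spline, and $(\dagger)$ reduces to a finite inequality among these rationals; but this does not reach the arbitrary real exponent, which is the real content of the statement.)
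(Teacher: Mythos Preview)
Your reduction is clean and correct up to the point where you arrive at the covariance inequality $(\ast)$ --- but you do not prove $(\ast)$, and you say so yourself. Since everything before it is essentially bookkeeping (the identity $f_1\equiv 1$, the Hurwitz--zeta representation, the logarithmic derivative), the entire content of the proposition has been transferred into $(\ast)$, and the proposal as written is a reformulation rather than a proof. The positive-correlation observation (both $t\tanh(vt)$ and $\log t$ increasing) yields $\mathrm{Cov}>0$ for free, but no mechanism is offered for the \emph{quantitative} bound $\mathrm{Cov}\ge \tfrac1s\,\mathbb{E}$; invoking ``log-convexity of $\sigma\mapsto\zeta(\sigma,x)+\zeta(\sigma,1-x)$'' is just Cauchy--Schwarz on the same integrals and does not by itself produce the factor $1/s$. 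Note also that you are aiming at strictly more than the proposition asks: first monotonicity of $f_r$ on $[0,\tfrac12]$ rather than merely the location of the minimum, and then monotonicity of $s\mapsto\lambda_s(x)/s$, which is stronger again than $(\dagger)$.

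For comparison, the paper's argument avoids any analysis of $H_s$ as a function of $s$. It pairs the terms symmetrically, $s_m(x):=h(x+m)+h(x-(m+1))$ with $h=\sinc^2(\pi\,\cdot\,)$, and shows by direct (but nontrivial) calculus that $s_0$ is minimal at $\tfrac12$ while each $s_m$ with $m\ge1$ is maximal there. Since $\sum_m s_m\equiv 1$ (your $s=2$ identity), the sequence $(s_m(x))_m$ majorizes $(s_m(\tfrac12))_m$ in a Hardy--Littlewood--P\'olya sense, and applying the convex map $t\mapsto t^r$ gives $\sum s_m(x)^r\ge\sum s_m(\tfrac12)^r$; two further elementary inequalities handle the passage between $(h(x+m)+h(x-(m+1)))^r$ and $h(x+m)^r+h(x-(m+1))^r$. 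The point is that the $r$-dependence is isolated in a single convexity step, and all the hard estimates are carried out on finitely many explicit, $r$-independent elementary functions --- precisely the kind of concrete handle that your inequality $(\ast)$ lacks.
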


Our proof has developed essentially by trial and error. We believe it is perfectly possible that a
truly pedestrian proof can be given, but we failed to find one despite many hours of efforts.

As a consequence of Proposition~\ref{prop}
 we obtain the explicit estimate
$$  \varphi_{p,X}(\R^d)  \leq \varphi_{p,X}(\T^d) 
\leq 
 \frac{\pi^d}{ \bigl(2(2^q-1)\zeta(q)\bigr)^{d/q}}\cdot\varphi_{p,X}(\R^d), $$
noting that 
$$ \sum_{m \in \Z}\frac1{|\frac12+m|^{q}} 
= 2(2^q-1)\zeta(q).$$
For even integers $q = 2n$, the constant on the right-hand side may 
be evaluated explicitly in terms of the Bernoulli numbers. To further estimate this constant,
recall that for any $x\in \ell^2(\Z)$ the function 
$q\mapsto ||x||_q:=(\sum_{m\in \Z}|x_m|^q)^{1/q}$ is decreasing on $[2,\infty)$ 
and $\lim_{q\to\infty}||x||_q=\sup_{i\in \Z}|x_i|$. Taking $x_m:=|\tfrac{1}{2}+m|^{-1}$
we find $(\sum_{m\in\Z}|\tfrac{1}{2}+m|^{-q})^{1/q}\geq 2$ for every $q\geq 2$, and hence  
in particular
$$ \varphi_{p,X}(\R^d)  \leq \varphi_{p,X}(\T^d) 
\leq (\tfrac{1}{2}\pi)^d \varphi_{p,X}(\R^d).$$

\begin{figure}\label{fig1}
\begin{center}
\includegraphics[width=7cm]{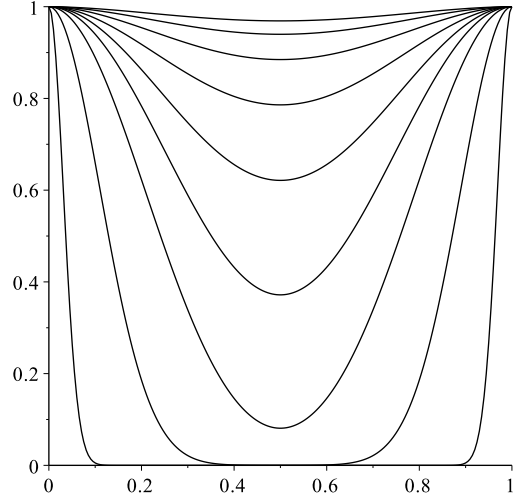}
\end{center}
\caption{A plot of $f_r$, where $r=1.02^{k}$ for $k=1,2,4,\dots,256$.}
\end{figure} 
 
\section{The main result}

The proof of the proposition is based on the following lemmas. The main idea is contained in the first lemma.

\begin{lemma}\label{lem:3}
Let $g:\R_+\to\R_+$ be a { non-de}creasing convex function, and let 
$x_1, \dots ,x_n\in\R_+$ and $ y_1,\dots,y_n \in \R_+$ be such that
\begin{enumerate}
 \item[(i)] $x_1+ \cdots +x_n \ge  y_1+\cdots+y_n$;
 \item[(ii)] there exists $t\in\R_+$ such that 
\begin{itemize}
 \item 
$x_i\le y_i$ if $ y_i<t$;
 \item
$x_i\ge y_i$ if $ y_i\ge t$.
\end{itemize}
\end{enumerate}
Then $g(x_1)+\cdots +g(x_n)\ge g(y_1)+\cdots+g(y_n)$.
\end{lemma}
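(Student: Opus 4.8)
The plan is to reduce everything to a single pointwise linear comparison. I would choose a slope constant $c$ equal to a subgradient of $g$ at the threshold value $t$ from hypothesis (ii) — concretely, the right-hand derivative $c:=g_+'(t)$ — and note that $c\ge 0$ because $g$ is non-decreasing (and $c<\infty$: automatic when $t>0$, and for $t=0$ because $0\le g_+'(0)=\inf_{h>0}\tfrac{g(h)-g(0)}{h}\le g(1)-g(0)$). The claim to establish is
\[
g(x_i)-g(y_i)\ \ge\ c\,(x_i-y_i)\qquad\text{for every }i=1,\dots,n.
\]
Granting this, summing over $i$ and invoking hypothesis (i) together with $c\ge 0$ yields $\sum_i g(x_i)-\sum_i g(y_i)\ge c\bigl(\sum_i x_i-\sum_i y_i\bigr)\ge 0$, which is exactly the assertion.

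To prove the pointwise claim I would split according to hypothesis (ii). If $y_i\ge t$, then $x_i\ge y_i\ge t$; the case $x_i=y_i$ is trivial, and for $x_i>y_i$ the ``increasing slopes'' property of convex functions gives $\frac{g(x_i)-g(y_i)}{x_i-y_i}\ge g_+'(y_i)\ge g_+'(t)=c$, since the right derivative is non-decreasing and $y_i\ge t$; rearranging gives the claim. If instead $y_i<t$, then $x_i\le y_i<t$; again $x_i=y_i$ is trivial, and for $x_i<y_i$ the same property gives $\frac{g(y_i)-g(x_i)}{y_i-x_i}\le g_-'(y_i)\le g_+'(t)=c$ (using monotonicity of the one-sided derivatives, which forces $g_-'(y_i)\le g_+'(y_i)\le\frac{g(t)-g(y_i)}{t-y_i}\le g_+'(t)$ because $y_i<t$), and rearranging this again produces $g(x_i)-g(y_i)\ge c(x_i-y_i)$. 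Hence the claim holds for all $i$.

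The argument is elementary and I do not expect a genuine obstacle; the one point requiring care is that a general convex $g$ need not be differentiable, so one must work with one-sided derivatives (or a subgradient) rather than $g'$, and must verify that the \emph{single} constant $c=g_+'(t)$ simultaneously lies below all relevant secant slopes on the ``outer'' side $\{y_i\ge t\}$ and above all of them on the ``inner'' side $\{y_i<t\}$. This is precisely where the asymmetry of hypothesis (ii) — $x_i\le y_i$ below $t$, $x_i\ge y_i$ above $t$ — is matched against the monotonicity of the slope of $g$ across $t$, and where hypothesis (i) is consumed. The degenerate situations $t=0$ (no index with $y_i<t$, so the conclusion already follows from monotonicity of $g$) and $x_i=y_i$ are disposed of as above. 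Heuristically, the $x_i$ are obtained from the $y_i$ by moving mass away from the level $t$ without decreasing the total, and a non-decreasing convex function cannot decrease under such a rearrangement.
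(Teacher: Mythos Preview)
Your proof is correct, and it takes a genuinely different route from the paper's. The paper argues by induction on $n$: if some $x_i<y_i$ and some $x_j>y_j$, one transfers an amount $\epsilon=\min(y_i-x_i,x_j-y_j)$ from $x_j$ to $x_i$, uses convexity to show that $g(x_i)+g(x_j)$ can only decrease under this transfer, and observes that after the transfer one of the new $x$'s matches the corresponding $y$, so the induction hypothesis applies. Your argument instead linearizes at the threshold: with $c=g_+'(t)$, the auxiliary function $u\mapsto g(u)-cu$ is convex with a minimum at $t$, hence non-increasing on $[0,t]$ and non-decreasing on $[t,\infty)$; hypothesis (ii) then yields the pointwise inequality $g(x_i)-g(y_i)\ge c(x_i-y_i)$ directly, and summing against (i) with $c\ge 0$ finishes. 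Your approach is shorter, avoids induction, and makes the role of $t$ more transparent (it is exactly the point at which one picks the supporting line); the paper's mass-transfer argument is more elementary in that it uses only the defining inequality of convexity and no one-sided derivatives, and is closer in spirit to classical majorization proofs.
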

\begin{proof}
We will prove the lemma by induction on $n$. The case $n=1$ is clear: $x_1\geq y_1$ implies that 
$g(x_1)\geq g(y_1)$ since $g$ is non-decreasing. Suppose now that the lemma has been proved for $n=1,\dots,m-1$. 

If $x_i=y_i$ for some index $1\le i\le m$, then we may remove $x_i$ and $y_i$ and apply the induction hypothesis. 

If $x_i\geq y_i$ for every index $1\le i\le m$, 
then again the result is immediate since $g$ is non-decreasing. Therefore, we may assume that $x_i<y_i$ for 
some index $1\le i\le m$. Then, by the first condition in the lemma, there is also an index $j$ for which $x_j>y_j$. 
By the second condition in the lemma we then have $x_i<y_i<t\leq y_j<x_j$. 

Let $\epsilon:=\min(y_i-x_i,x_j-y_j)$ and define $x'_i:=x_i+\epsilon$, $x'_j:=x_j-\epsilon$, and $x'_k:=x_k$ for all 
other indices. Then $x'_1,\dots,x'_m,y_1,\dots,y_m$ satisfy the conditions in the lemma (with the same $t$) and 
$x'_i=y_i$ or $x'_j=y_j$. Hence, by the induction hypothesis, we have 
\begin{equation}\label{first}
g(x'_1)+\cdots +g(x'_m)\ge g(y_1)+\cdots+g(y_m).
\end{equation}
Since $x_i\leq x'_i\leq x'_j\leq x_j$, we can write $x'_i=\lambda x_i+(1-\lambda)x_j$ for some $\lambda\in [0,1]$. 
Since $x'_j=x_i+x_j-x'_i$, we have $x'_j=(1-\lambda) x_i+\lambda x_j$. By the convexity of $g$ it follows that
\begin{equation}\label{second}
g(x'_i)+g(x'_j)\leq \left(\lambda g(x_i)+(1-\lambda) g(x_j)\right)+ \left((1-\lambda) g(x_i)+\lambda g(x_j)\right)=g(x_i)+g(x_j).
\end{equation}
Combining inequalities (\ref{first}) and (\ref{second}) we obtain the lemma for $n=m$, thus completing the induction step.
 \end{proof}

In order to apply this lemma we need a number of technical facts.  
The first  (cf. \cite[(6.14)]{Gar})  is elementary and is left as an exercise. 

\begin{lemma}\label{lem:1} 
$f_1(x) = 1$ for all $x\in [0,1]$.
\end{lemma}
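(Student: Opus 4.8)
The plan is to collapse the infinite sum to a single classical series. The key observation is that $\sin(\pi(x+m)) = \cos(\pi m)\sin(\pi x) = (-1)^{m}\sin(\pi x)$ for every $m\in\Z$, so that $|\sin(\pi(x+m))|^{2} = \sin^{2}(\pi x)$ independently of $m$. Pulling this factor out of the sum, one is reduced to showing
$$ \sin^{2}(\pi x)\sum_{m\in\Z}\frac{1}{\pi^{2}(x+m)^{2}} = 1 . $$
The series on the left converges absolutely, and locally uniformly on $\R\setminus\Z$, since its terms are $O(m^{-2})$, so no interchange issues arise.

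To finish I would invoke the Mittag--Leffler partial fraction expansion $\sum_{m\in\Z}(x+m)^{-2} = \pi^{2}/\sin^{2}(\pi x)$, valid for $x\notin\Z$, which is obtained by differentiating the cotangent expansion $\pi\cot(\pi x) = \sum_{m\in\Z}(x+m)^{-1}$ (summed symmetrically) term by term, the differentiation being justified by the local uniform convergence. This immediately yields $f_1(x) = 1$ for $x\in(0,1)$. The endpoints $x=0$ and $x=1$ are then covered by the removable-singularity convention stated after Proposition~\ref{prop:1}: at $x=0$ only the $m=0$ term is nonzero, with value $\lim_{y\to 0}|\sin(\pi y)/(\pi y)|^{2}=1$, and at $x=1$ the $m=-1$ term plays the same role.

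A slicker alternative, better suited to the spirit of the paper, is to apply Parseval's identity. Writing $c_m := \int_{0}^{1} e^{2\pi i x t}e^{2\pi i m t}\,dt = \dfrac{e^{2\pi i(x+m)}-1}{2\pi i(x+m)}$, a short computation using $|e^{i\theta}-1|^{2}=2-2\cos\theta$ shows $|c_m|^{2} = |\sin(\pi(x+m))/(\pi(x+m))|^{2}$, which is exactly the $m$-th term of $f_1(x)$. Since the $c_m$ are, up to reindexing $m\mapsto -m$, the Fourier coefficients of $t\mapsto e^{2\pi i x t}\in L^{2}[0,1]$, Parseval gives $f_1(x)=\sum_{m\in\Z}|c_m|^{2}=\int_{0}^{1}|e^{2\pi i x t}|^{2}\,dt=1$. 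Either way the argument is routine; the only point requiring a moment's attention is the convergence of the series and the behaviour at the endpoints, which is consistent with the authors' calling this an exercise.
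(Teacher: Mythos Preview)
Your proposal is correct. The paper itself does not supply a proof of this lemma---it is ``left as an exercise'' with a pointer to \cite[(6.14)]{Gar}---so there is nothing to compare against directly. Both of your arguments (factoring out $\sin^{2}(\pi x)$ and invoking $\sum_{m\in\Z}(x+m)^{-2}=\pi^{2}/\sin^{2}(\pi x)$, or alternatively Parseval for $t\mapsto e^{2\pi i x t}$ on $[0,1]$) are standard and are exactly the kind of solution the authors would expect; the endpoint discussion is also handled correctly. Incidentally, the Koornwinder argument reproduced in the paper's ``Added in proof'' section recovers $f_{1}\equiv 1$ as its base case via the identity $(d/dt)\cot t = -1/\sin^{2}t$, which is the same content as your first approach.
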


Let $h:\R\to\R$ be defined by $$h(x):= \sinc^2(\pi x) =  \Big(\frac{\sin(\pi x)}{\pi x}\Big)^2, \quad x\in\R.$$

\begin{lemma}\label{lem:2} Let $r\ge 1$.
 The following assertions hold on the interval $[0,1]$: 
 \begin{enumerate}
  \item[\rm(i)] the function $h(x)+h(x-1)$ has a global minimum at $x=\frac12$;
  \item[\rm(ii)] for all $m=1,2,3,\dots$, $h(x+m)+h(x-(m+1))$ has a global maximum at $x=\frac12$;
  \item[\rm(iii)] the function $$h(x) + h(x-1) - (h(x)^r+h(x-1)^r)^{1/r} $$ has a global maximum at $x=\frac12$;
  \item[\rm(iv)] for all $m=1,2,3,\dots$ and $r\ge 1$,  $$(h(x+m)+h(x-(m+1)))^r - h(x+m)^r-h(x-(m+1))^r$$ has a global maximum at $x=\frac12$.
 \end{enumerate}
\end{lemma}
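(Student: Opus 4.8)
The plan is to reduce each of the four assertions to an inequality in the single variable $t:=x-\tfrac12$, ranging over $[-\tfrac12,\tfrac12]$. Since $\sin^{2}\pi(\tfrac12+t)=\cos^{2}\pi t$, every term $h(\tfrac12+t+k)$, $k\in\Z$, has numerator $\cos^{2}\pi t$ and denominator $\pi^{2}(k+\tfrac12+t)^{2}$, so each of the four functions becomes a concrete even function of $t$ which is to attain its extremum at $t=0$.

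For (i) and (ii) I would use the Euler product $\cos\pi t=\prod_{n\ge0}\bigl(1-\tfrac{t^{2}}{(n+1/2)^{2}}\bigr)$. A short computation rewrites (i) as $\cos^{2}(\pi t)\,(1+4t^{2})\ge(1-4t^{2})^{2}$ and (ii) as $\cos^{2}(\pi t)\le(c^{2}-t^{2})^{2}/\bigl(c^{2}(c^{2}+t^{2})\bigr)$ with $c:=m+\tfrac12$; here $(1-4t^{2})^{2}$ is the square of the $n=0$ factor and $1+4t^{2}=1+t^{2}/c^{2}$ for $c=\tfrac12$, so the two cases are parallel, with $c=\tfrac12$ in (i) and $c\ge\tfrac32$ in (ii). Squaring the product and cancelling the factor with $n+\tfrac12=c$, both become the assertion that a product of the remaining factors, times $1+t^{2}/c^{2}$, is $\ge1$ in case (i) and $\le1$ in case (ii); taking logarithms, this follows from the elementary bounds $-\tfrac{y}{1-y}\le\log(1-y)\le-y$ on $[0,1)$ and $\tfrac12 w\le\log(1+w)\le w$ on $[0,1]$, together with $\sum_{n\ge0}(n+\tfrac12)^{-2}=\tfrac{\pi^{2}}{2}$, the estimate $(n+\tfrac12)^{2}-t^{2}\ge n(n+1)$ valid for $|t|\le\tfrac12$, $\sum_{n\ge1}\tfrac1{n(n+1)}=1$, and $3/c^{2}<\pi^{2}$ for $c\ge\tfrac32$. (Alternatively, (i) follows from (ii) and Lemma~\ref{lem:1}: since $f_{1}\equiv1$ one has $h(x)+h(x-1)=1-\sum_{m\ge1}\bigl(h(x+m)+h(x-(m+1))\bigr)$, and the subtracted series is, by (ii), maximal at $x=\tfrac12$.)

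For (iv) I would observe that, writing $a=h(x+m)$ and $b=h(x-(m+1))$, the function equals $(a+b)^{r}-a^{r}-b^{r}=(a+b)^{r}\bigl(1-\bar a^{\,r}-\bar b^{\,r}\bigr)$ with $\bar a=a/(a+b)$, $\bar b=b/(a+b)$. By (ii) the factor $(a+b)^{r}$ is maximal at $x=\tfrac12$ (taking $r$-th powers preserves this as $a+b\ge0$), while convexity of $s\mapsto s^{r}$ shows that $1-\bar a^{\,r}-\bar b^{\,r}$ is maximal over $\bar a+\bar b=1$ at $\bar a=\bar b=\tfrac12$, which is exactly the value of $(\bar a,\bar b)$ at $x=\tfrac12$; since both factors are nonnegative, their product is maximal at $x=\tfrac12$.

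Assertion (iii) has the same algebraic shape --- with $a=h(x)$, $b=h(x-1)$ it is $(a+b)\bigl(1-(\bar a^{\,r}+\bar b^{\,r})^{1/r}\bigr)$, and the second factor is again maximal at $x=\tfrac12$ --- but now assertion (i) tells us the amplitude $a+b=h(x)+h(x-1)$ is \emph{minimal} at $x=\tfrac12$, so the product argument of (iv) breaks down, and one must exploit the exact relation $a/b=(1-x)^{2}/x^{2}$ tying the amplitude to the ratio. My plan would be to reduce, via $x=\tfrac12+t$ and the symmetry $x\mapsto1-x$, to showing that the resulting one-variable function $\Phi_{r}$ on $[0,\tfrac12]$ --- which equals $\tfrac4{\pi^{2}}(2-2^{1/r})$ at $t=0$, vanishes at $t=\tfrac12$, and satisfies $\Phi_{r}'(0)=0$ --- never exceeds $\Phi_{r}(0)$, by estimating $\Phi_{r}'$. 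I expect this to force a separate analysis near $t=0$ (a Taylor expansion, in which the convexity inequality $2^{1/r}\le1+\tfrac1r$ should enter) and for $t$ bounded away from $0$ (where $\cos^{2}\pi t$ is bounded away from $1$). This last step is the genuine obstacle, and is presumably what the authors found only ``by trial and error''.
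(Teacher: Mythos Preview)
Your treatments of (i), (ii) and (iv) are correct and genuinely different from the paper's. For (i) and (ii) the paper proceeds via a preparatory Lemma~\ref{lem:6}: it proves directly that $(x^{2}+1)\cos^{2}(\tfrac12\pi x)/(1-x^{2})^{2}$ has minimum $1$ at $x=0$ (this gives (i)), and for (ii) it shows by a derivative computation that $\sqrt{x^{2}+1}\,\cos(\tfrac12\pi x)/(M^{2}-x^{2})$ is decreasing on $[0,1]$ for $M\ge3$. Your Euler-product argument bypasses all of this and is shorter and more conceptual; the alternative derivation of (i) from (ii) via $f_{1}\equiv1$ is also a nice shortcut the paper does not mention. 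For (iv) the paper again invokes Lemma~\ref{lem:6}(i) and then proves by an explicit derivative calculation that $\bigl((a+x)^{-2}+(a-x)^{-2}\bigr)(1-4x^{2})^{2}$ is decreasing on $[0,\tfrac12]$; your factorisation $(a+b)^{r}\bigl(1-\bar a^{\,r}-\bar b^{\,r}\bigr)$, with (ii) controlling the first factor and convexity of $s\mapsto s^{r}$ the second, is considerably simpler.

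For (iii), however, what you have written is not a proof but a plan, and it misses the key idea. The paper does \emph{not} use your factorisation $(a+b)\bigl(1-(\bar a^{\,r}+\bar b^{\,r})^{1/r}\bigr)$, in which the first factor goes the wrong way. Instead it exploits the fact that $h(x)$ and $h(x-1)$ share the numerator $\sin^{2}(\pi x)$, so that after the shift $x=\tfrac12+t$ one can write
\[
h(\tfrac12{+}t)+h(\tfrac12{+}t-1)-\bigl(h(\tfrac12{+}t)^{r}+h(\tfrac12{+}t-1)^{r}\bigr)^{1/r}
=\frac{1}{\pi^{2}}\cdot\frac{\cos^{2}(\pi t)}{(\tfrac14-t^{2})^{2}}\cdot\phi_{r}(t),
\]
with $\phi_{r}(t)=(\tfrac12-t)^{2}+(\tfrac12+t)^{2}-\bigl((\tfrac12-t)^{2r}+(\tfrac12+t)^{2r}\bigr)^{1/r}\ge0$. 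In this factorisation \emph{both} factors are maximal at $t=0$: the first because $\cos(\tfrac12\pi y)\le1-y^{2}$ on $[0,1]$ (Lemma~\ref{lem:6}(i)), and the second because $\phi_{r}$ is decreasing on $[0,\tfrac12]$. The latter is the heart of the matter: computing $\phi_{r}'$ and substituting $a=\tfrac12+t$, $b=\tfrac12-t$, $p=2r-1$, one is reduced to the elementary inequality
\[
u^{p}-1\ \ge\ (u-1)\,(u^{p}+1)^{1-1/p}\qquad(u\ge1,\ p\ge1),
\]
which the paper proves in a few lines. This single global inequality replaces your proposed two-regime analysis; without it (or an equivalent device) your outline for (iii) remains incomplete.
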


Assuming the lemmas for the moment, let us first show how the proposition can be deduced from them.

\begin{proof}[Proof of Proposition~\ref{prop}] Fix $r\ge 1$ and 
set, for $x\in [0,1]$, 
$$s_m(x):=h(x+m)+h(x-(m+1)) \qquad (m=0,1,2,\dots)$$
and 
$$\widetilde s_0(x) :=  ((h(x))^r+(h(x-1))^r)^{1/r}.$$ 
In view of part (iv) of Lemma~\ref{lem:2}
it suffices to prove that 
$${\widetilde s_0}^{\,r}+s_1^r+s_2^r+\cdots $$ 
has a global minimum at $x=\frac12$. 

Fix an arbitrary $x \in [0,1]$ and set
$$x_m:=s_m(x), \quad 
y_m:=s_m(\tfrac12) \qquad (m=0,1,2,\dots)$$
and 
$$ \widetilde x_0 := ((h(x))^r+h(x-1)^r)^{1/r},\quad \widetilde y_0 := ((h(\tfrac12))^r+h(-\tfrac12)^r)^{1/r}.$$
In view of parts (i) and (ii) of Lemma~\ref{lem:2} we have 
\begin{align}\label{eq:1}
x_0\geq y_0, \quad  x_i\leq y_i \qquad(i=1,2,\dots)
\end{align}
Lemma~\ref{lem:1} implies 
\begin{align}\label{eq:2} 
x_0+x_1+x_2+\cdots=y_0 +y_1+y_2+\cdots 
\end{align}
By \eqref{eq:1} and \eqref{eq:2},
\begin{align}\label{eq:2a} 
x_0+x_1+\cdots+x_n \ge y_0 +y_1+\cdots+y_n \qquad (n=0,1,2,\ldots)
\end{align}
Part (iii) of Lemma~\ref{lem:2} implies
\begin{align}\label{eq:3} 
\widetilde x_0 -x_0 \ge \widetilde y_0-y_0.
\end{align}
{ By \eqref{eq:2a} and \eqref{eq:3},}
\begin{equation}\label{eq:5}  \widetilde x_0+x_1+\cdots+x_n\ge \widetilde y_0+y_1+\cdots+y_n \qquad (n=0,1,2,\ldots)
\end{equation}
Finally, by \eqref{eq:1} and \eqref{eq:3},
\begin{align}\label{eq:4} 
\widetilde x_0  \ge \widetilde y_0.
\end{align}

A simple calculation shows that $\widetilde y_0>\tfrac{4}{\pi^2}$ and $y_i<\tfrac{4}{\pi^2}$ for $i=1,2,\dots$. 
Taking $t=\tfrac{4}{\pi^2}$ in Lemma~\ref{lem:3} and $g(x):=x^r$ now implies, by 
virtue of \eqref{eq:1}, \eqref{eq:5}, and \eqref{eq:4},  that 
$$\widetilde x_0^{\,r}+x_1^r+\cdots+x_n^r\geq \widetilde y_0^{\,r}+y_1^r+\cdots+y_n^r$$
holds for every $n$. Taking limits for $n\to\infty$ completes the proof.
\end{proof}

\section{Proof of Lemma~\ref{lem:2}}

This section is devoted to the proof of Lemma~\ref{lem:2}, which is based on the following observations: 

\begin{lemma}\label{lem:6} On the interval $[0,1]$:
\begin{itemize}
  \item[(i)] $\displaystyle\frac{\cos(\frac12\pi x)}{1-x^2}$ takes a global maximum at $x=0$;\smallskip
  \item[(ii)] $\displaystyle\frac{(x^2+1)\cos^2(\frac12\pi x)}{(1-x^2)^2}$ takes a global minimum at $x=0$.
\end{itemize}
\end{lemma}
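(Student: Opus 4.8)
The two statements concern the single explicit function $\cos(\tfrac12\pi x)/(1-x^2)$ on $[0,1]$, which has a removable singularity at $x=1$ (the value there is $\pi/4$, by l'Hôpital), and its ``weighted square'' $(x^2+1)\cos^2(\tfrac12\pi x)/(1-x^2)^2$. At $x=0$ the first function equals $1$ and the second equals $1$ as well, so in each case we must show the function stays $\le 1$, respectively $\ge 1$, throughout $[0,1]$. My plan is to reduce everything to a single inequality about $\cos(\tfrac12\pi x)$ versus a rational function and then prove that inequality by a monotonicity/convexity argument on a cleverly chosen auxiliary function, since a direct sign analysis of the derivative looks forbidding.

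First I would rewrite (i) as the claim $\cos(\tfrac12\pi x)\le 1-x^2$ on $[0,1]$. Substituting $u=\tfrac12\pi x\in[0,\tfrac\pi2]$ this becomes $\cos u\le 1-\tfrac{4}{\pi^2}u^2$, i.e.\ $1-\cos u\ge \tfrac{4}{\pi^2}u^2$. The function $\phi(u):=(1-\cos u)/u^2$ is well known to be decreasing on $(0,\pi]$ (it equals $\tfrac12\,\mathrm{sinc}$-type; concretely $\phi'(u)$ has the sign of $u\sin u - 2(1-\cos u)=u\sin u-4\sin^2(u/2)$, which is $\le0$ on $(0,\pi]$ because $\tan(u/2)\ge u/2$ there). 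Hence $\phi(u)\ge\phi(\pi/2)=\tfrac{4}{\pi^2}(1-0)=\tfrac{4}{\pi^2}$ wait — $\phi(\pi/2)=(1-0)/(\pi^2/4)=4/\pi^2$ exactly, so the inequality $\phi(u)\ge 4/\pi^2$ holds precisely for $u\le\pi/2$, which is our whole range. This settles (i), and simultaneously records that equality holds only at $x=0$ and $x=1$.

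For (ii), using (i) in the form $0\le\cos(\tfrac12\pi x)\le 1-x^2$ on $[0,1]$, I would write $g(x):=(x^2+1)\cos^2(\tfrac12\pi x)/(1-x^2)^2$ and aim for $g(x)\ge1$, i.e.\ $(x^2+1)\cos^2(\tfrac12\pi x)\ge(1-x^2)^2$. This does \emph{not} follow from (i) alone (that gives the reverse-type bound), so a sharper lower bound on $\cos(\tfrac12\pi x)$ is needed; the natural one, again via $u=\tfrac12\pi x$ and the monotonicity of $\psi(u):=(1-\cos u)/u^4$ or better of $(u-\sin u)$-type comparisons, is a Taylor-accurate estimate such as $\cos(\tfrac12\pi x)\ge(1-x^2)/\sqrt{1+x^2}$ on $[0,1]$ — and one checks this \emph{is} exactly the assertion $g\ge1$, so it is not a reduction but the target itself. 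The cleaner route is to introduce $F(x):=\sqrt{x^2+1}\,\cos(\tfrac12\pi x)-(1-x^2)$ and show $F\ge0$ on $[0,1]$ with $F(0)=0$, $F(1)=\sqrt2\cdot 0 - 0=0$; since $F(0)=F(1)=0$, proving $F$ is concave on $[0,1]$ would finish it. Thus the crux is the sign of $F''$, equivalently an inequality of the shape $c(x)\cos(\tfrac12\pi x)+s(x)\sin(\tfrac12\pi x)\le 0$ with explicit rational coefficients $c,s$ coming from differentiating $\sqrt{x^2+1}$ twice.

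The main obstacle is precisely this last step: establishing concavity of $F$ (or, equivalently, the refined cosine bound in (ii)) requires controlling a combination of $\cos$ and $\sin$ against rational functions, and a brute-force derivative sign chase is messy near $x=1$ where several terms vanish to different orders. I expect the workable tactic is to factor out the obvious double zero — write $\cos(\tfrac12\pi x)=\tfrac\pi4(1-x)(1+x)\,\mathrm{sinc}$-like quotient is not literally true, but $1-\cos u$ and $u-\sin u$ have clean power-series with positive coefficients — and then compare the resulting analytic functions coefficient-by-coefficient, or alternatively bound $\cos(\tfrac12\pi x)$ between two consecutive Taylor partial sums (an alternating-series enclosure, valid since $\tfrac\pi2<\pi$) and verify the polynomial inequality $(x^2+1)(1-\tfrac{\pi^2}{8}x^2+\dots)^2\ge(1-x^2)^2$ by elementary means on $[0,1]$. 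Once the polynomial version is pinned down, (ii) — and with it the whole of Lemma~\ref{lem:6}, which in turn feeds Lemma~\ref{lem:2} — follows.
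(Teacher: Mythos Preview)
Your argument for (i) is correct and close in spirit to the paper's: both reduce to $\cos(\tfrac12\pi x)\le 1-x^2$, which the paper derives from the double-angle formula together with the inequality $\sqrt{2}\sin(\tfrac14\pi x)\ge x$, while you use the equivalent monotonicity of $(1-\cos u)/u^2=\tfrac12\operatorname{sinc}^2(u/2)$.

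For (ii), however, your main proposed route does not work. Setting $F(x)=\sqrt{x^2+1}\,\cos(\tfrac12\pi x)-(1-x^2)$, one computes
\[
F''(x)=\frac{\cos(\tfrac12\pi x)}{(x^2+1)^{3/2}}-\frac{\pi x\sin(\tfrac12\pi x)}{\sqrt{x^2+1}}-\frac{\pi^2}{4}\sqrt{x^2+1}\,\cos(\tfrac12\pi x)+2,
\]
so $F''(0)=3-\tfrac{\pi^2}{4}\approx 0.533>0$. Thus $F$ is strictly convex near $0$ and is \emph{not} concave on $[0,1]$; the concavity argument cannot close. (In fact $F''$ changes sign on $[0,1]$, so a direct sign chase on $F''$ is hopeless.)

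Your fallback---an alternating-series lower bound on $\cos(\tfrac12\pi x)$ followed by a polynomial check---is on the right track and is essentially what the paper does, but one further idea is needed. The one-term bound $\cos(\tfrac12\pi x)\ge 1-\tfrac{\pi^2}{8}x^2$ becomes negative for $x>\sqrt{8}/\pi\approx 0.9$, so squaring it gives no useful lower bound on $\cos^2$ there; correspondingly the polynomial inequality $(x^2+1)\bigl(1-\tfrac{\pi^2}{8}x^2\bigr)^2\ge(1-x^2)^2$ actually \emph{fails} on a subinterval near $x=1$ (try $x=0.9$). The paper resolves this by splitting the interval: on $[0,\tfrac12]$ it uses exactly the one-term Taylor bound and a short polynomial estimate, while on $[\tfrac12,1]$ it substitutes $x=1-y$, reducing the claim to $(2-2y+y^2)\sin^2(\tfrac12\pi y)\ge(2y-y^2)^2$, which follows from the sine inequality $\sin^2(\tfrac12\pi y)\ge 2y^2$ (an instance of the same basic inequality used for (i), applied to $2y\in[0,1]$). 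Adding this interval split---or, alternatively, carrying enough Taylor terms to keep the lower bound nonnegative on all of $[0,1]$ and then wrestling with a higher-degree polynomial---is the missing ingredient in your plan.
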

\begin{proof}
We start by showing that 
\begin{equation}\label{basicinequality}
\sqrt{2}\sin(\tfrac{1}{4}\pi x)\geq x\quad\text{for all $x\in [0,1]$}.
\end{equation}
To this end, consider the function $f(x):=\sqrt{2}\sin(\tfrac{1}{4}\pi x)-x$. Observe that 
$f'(x)=\tfrac{\pi\sqrt{2}}{4}\cos(\tfrac{1}{4}\pi x)-1$ is decreasing on $[0,1]$, hence $f$ is 
concave. Since $f(0)=f(1)=0$ this implies that $f(x)\geq 0$ for $x\in [0,1]$, which proves the claim.

\smallskip
(i): \  
The value at $x=0$ of the given function equals $1$, so it suffices to show that 
$\cos(\tfrac{1}{2}\pi x) \leq  1-x^2$ for all $ x\in [0,1]$. This follows from the double-angle 
formula for cosine and \eqref{basicinequality}: 
$$\cos(\tfrac{1}{2}\pi x)=1-2\sin^2(\tfrac{1}{4}\pi x)\leq 1-x^2.$$

\smallskip
 (ii): \  The given function has value $1$ at $x=0$, hence it suffices to show that for all $x\in [0,1]$, 
$${(x^2+1)\cos^2(\tfrac12\pi x)} \ge {(1-x^2)^2}.$$

On the interval $[\frac12,1]$ we substitute $x = 1-y$. We then must prove that for $y\in [0,\frac12]$,
$$(2-2y+y^2)\sin^2(\tfrac{1}{2}\pi y) \ge (2y-y^2)^2 .$$

Since $2y\in [0,1]$, we can use \eqref{basicinequality} to obtain $\sqrt{2}\sin(\tfrac{1}{4}\pi \cdot2y)\geq 2y$, 
and hence $\sin^2(\tfrac{1}{2}\pi y)\geq 2y^2$. This implies that 
$$
(2-2y+y^2)\sin^2(\tfrac{1}{2}\pi y)\geq (2-2y+y^2)(2y^2)=(y^2+(2-y)^2)y^2\geq (2-y)^2y^2=(2y-y^2)^2,
$$
which concludes the proof on the interval $[\frac12,1]$. 

For $x\in [0,\frac12]$ we have 
\begin{align*}
 (x^2+1)\cos^2(\tfrac12\pi x) & \ge (x^2+1)\Big(1-\frac{\pi^2}{8} x^2\Big)^2
 \\ & = (x^2+1)(1-\frac{\pi^2}{4} x^2 + \frac{\pi^4}{64} x^4)
 \\ & \ge 1 +(1-\frac{\pi^2}{4}) x^2 + (\frac{\pi^4}{64} -\frac{\pi^2}{4}) x^4 
\\ & = 1 +(1-\frac{\pi^2}{4}) x^2 + (\frac{\pi^4}{64} -\frac{\pi^2}{4}-1) x^4  + x^4 
\\ & \ge 1 +\Big[(1-\frac{\pi^2}{4}) + \frac14(\frac{\pi^4}{64} -\frac{\pi^2}{4}-1)\Big] x^2  + x^4 
\\ & \ge 1-2x^2 + x^4 
 \\ & = (1-x^2)^2,
 \end{align*}
noting that $\frac{\pi^4}{64} -\frac{\pi^2}{4}-1 < 0$ and 
$(1-\frac{\pi^2}{4}) + \frac14(\frac{\pi^4}{64} -\frac{\pi^2}{4}-1)\approx -1.9537471\dots> -2$
\end{proof}

\begin{proof}[Proof of Lemma~\ref{lem:2}]
(i): \  We have
 $$ h(x)+h(x-1) = \frac{\sin^2(\pi x)}{\pi^2 x^2} + \frac{\sin^2(\pi x)}{\pi^2(x-1)^2} 
 = \frac{(2x^2-2x+1)\sin^2(\pi x)}{\pi^2 x^2(x-1)^2} =:g(x).$$
 We must show that $$f(x):= g(x+\frac12) = \frac{8}{\pi^2}\frac{4x^2 + 1}{(4x^2-1)^2 }\cos^2 (\pi x)$$
 has a global minimum in $x=0$ on the interval $[-\frac12,\frac12]$. 
 But this follows from Lemma~\ref{lem:6} and the fact that $f$ is even.

\smallskip
(ii):  \  For $m=1,2,3,\dots$ we have 
\begin{align*} h(x+m)+h(x-(m+1))
& = \frac{[2x^2-2x+(m+1)^2+m^2]\sin^2(\pi x)}{\pi^2 [(x+m)^2(x-(m+1))^2]} =:g_m(x). 
\end{align*}
We must show that $$f_m(x):= g_m(x+\frac12) 
= \frac{8}{\pi^2}\frac{4x^2  +  4m^2+4m + 1 }{[(4x^2 - (2m+1)^2]^2 }\cos^2 (\pi x)$$
 has a global maximum in $x=0$ on the interval $[-\frac12,\frac12]$. 
For this, it suffices to check that the functions
$$ \frac{4x^2  + 1 }{(4x^2 - M^2)^2 }\cos^2 (\pi x) \ \ \ \hbox{and} \ \ \
\frac{1 }{(4x^2 - M^2)^2 }\cos^2 (\pi x)$$
are decreasing on $[0,\frac12]$ for each $M\ge 3$, or equivalently, that 
$$ \frac{\sqrt{x^2  + 1 }}{M^2 - x^2 }\cos (\tfrac12\pi x) \ \ \ \hbox{and} \ \ \
\frac{1 }{M^2 - x^2 }\cos (\tfrac12\pi x)$$
are decreasing on $[0,1]$ for each $M\ge 3$. It suffices to prove this for the first function,
since this will immediately imply the result for the second function. 

Straightforward algebra shows that the derivative of the function
$$\psi_M(x) := \frac{\sqrt{x^2  + 1 }}{M^2 - x^2 }\cos (\tfrac12\pi x) $$ has a zero at $x$ if and only if
$$  2x( x^2+2+M^2)\cos (\tfrac12\pi x) = \pi(M^2- x^4+(M^2-1)x^2)\sin(\tfrac12\pi x).$$
But,
$$  2x(M^2+2+x^2)\cos (\tfrac12\pi x)
\le 2x(M^2+2+x^2)$$ and, since $0\le x\le 1$,
$$\pi(M^2- x^4+(M^2-1)x^2)x \le
\pi(M^2- x^4+(M^2-1)x^2)\sin(\tfrac12\pi x),$$
while also, using that $M\ge 3$ and $0\le x\le 1$, 
$$ 2(M^2+2+x^2) \le  2(M^2+2+(M^2-1) x^2) < \pi(M^2- 1+(M^2-1)x^2) 
\le \pi(M^2- x^4+(M^2-1)x^2) $$
since $2(M^2+2) < \pi(M^2-1)$ for $M\ge 3$.
It follows that the derivative of $\psi_M$
has no zeros on $(0,1]$, and then from $$\psi_M(0) = \frac{{ 1 }}{M^2} > 0 = \psi_M(1)$$
it follows that $\psi_M$ is decreasing on $[0,1]$.

\smallskip 
(iii): \ 
Proceeding as in (i),
we have
\begin{align*}
 &   h(x) + h(x-1) - ((h(x))^r+(h(x-1))^r)^{1/r}
 \\ & \quad = \frac1{\pi^2}\Big[\frac{1}{x^2} +\frac{1}{(1-x)^2} - \Big(\frac{1}{x^{2r}} +\frac{1}{(1-x)^{2r}}\Big)^{1/r}
\Big]\sin^{2}(\pi x) 
  =:g(x).
\end{align*}
 We must show that 
 \begin{align*}f(x):= g(\tfrac12+x) 
  = \frac1{\pi^2}\Big[ (\tfrac12-x)^2 + (\tfrac12+x)^2 - ((\tfrac12-x)^{2r} + (\tfrac12+x)^{2r})^{1/r} 
  \Big]\frac{\cos^{2}(\pi x)}{(\frac14-x^2)^2} 
 \end{align*}
 has a global maximum in $x=0$ on the interval $[-\frac12,\frac12]$. 
 The function $f$ is even, and by Lemma~\ref{lem:6}, ${\cos^2(\pi x)}/(\frac14-x^2)^2$ 
 takes its maximum at $x=0$.
It thus remains to show that on the interval $[0,\frac12]$ the function 
  $$
 \phi_r(x):= (\tfrac12-x)^2 + (\tfrac12+x)^2 - ((\tfrac12-x)^{2r} + (\tfrac12+x)^{2r})^{1/r}
 $$
  is decreasing on $[0,\frac12]$. The derivative of this function equals
 \begin{align*}
 \phi_r'(x) & =  4x - 2\big((\tfrac12-x)^{2r} + (\tfrac12+x)^{2r}\big)^{1/r-1}
 \big((\tfrac12+x)^{2r-1}-(\tfrac12-x)^{2r-1}\big).
 \end{align*}
To show that $\phi_r'(x)\le 0$ we must show that 
$$\big((\tfrac12+x)^{2r} + (\tfrac12-x)^{2r}\big)^{1/r-1}
 \big((\tfrac12+x)^{2r-1}-(\tfrac12-x)^{2r-1}\big) \ge 2x$$
for $x\in [0,\frac12]$, or, after substituting $a = \frac12+x$ and $b = \frac12-x$, that
$$ a^{2r-1} - b^{2r-1} \ge (a-b)\bigl( a^{2r} + b^{2r}\bigr)^{1-1/r}$$
for all $a\in [\frac12,1]$.
In view of 
\begin{align*}\bigl( a^{2r} + b^{2r}\bigr)^{1-1/r}
& = \Bigl[\bigl( a^{2r} + b^{2r}\bigr)^{1/(2r)}\Bigr]^{2r-2} \\ 
& \le  \Bigl[\bigl( a^{2r-1} + b^{2r-1}\bigr)^{1/(2r-1)}\Bigr]^{2r-2}
 =  \big(a^{2r-1} + b^{2r-1}\big)^{1- 1/(2r-1)},
\end{align*}
with  $p:=2r-1$ it suffices to show that 
 $$a^p - b^p  \ge (a-b)(a^p + b^p)^{1- 1/p}$$
 for all $a\ge b\ge 0$.
We can further simplify this upon dividing both sides by $b^p$. In the new variable $x= a/b$ we then have to prove that 
 $$x^p - 1  \ge (x-1)(x^p + 1)^{1- 1/p} $$
for all $x\ge 1$. 

Using that $(1+y)^\alpha \le 1+\alpha y$ for $y\ge 0$ and $0\le\alpha\le 1$, we have
 \begin{align*}
   (x-1)(x^p + 1)^{1- 1/p}
  = (x^p - x^{p-1})(1+x^{-p})^{1-1/p}
 \le (x^p - x^{p-1}) [ 1 + (1-\frac1p)\,x^{-p}].
\end{align*}
Therefore it remains to prove that for $x\ge 1$ and $p\ge 1$ we have 
$$ x^p - 1  \ge (x^p - x^{p-1}) [ 1 + (1-\frac1p)\,x^{-p}],$$
or, multiplying both sides with $x$, that
$$ x^{p+1} - x  \ge x^{p+1} - x^{p}  + (1-\frac1p)\,(x-1).$$
that is, we must show that
$$f_p(x):=  x^p \ge   x  +  (1-\frac1p)(x-1)=:g_p(x) .$$
Now
$$ f_p'(x) = px^{p-1}, \quad g_p'(x) =  2-\frac1p.$$
It follows that $f_p'(x) \ge g_p'(x)\ge 0$ for $x\ge 1$, since $p \ge 2-\frac1p$
(multiply both sides by $p$). Together with $f_p(1) =g_p(1)$  it follows that $f_p(x)\ge g_p(x)$ for $x\ge 1$ and $p\ge 1$.
This concludes the proof of (iii).
 
\smallskip 
(iv): \ Fix $m\ge 1$. For $x\in [-\frac12,\frac12]$ we have
\begin{align*}
 & (h(x+\tfrac12+m)+h(x+\tfrac12-(m+1)))^r - h(x+\tfrac12+m)^r-h(x+\tfrac12-(m+1))^r  
 \\ &  = \left[\Big(\frac{1}{(x+(m+\frac12))^{2}} + \frac{1}{(x-(m+\frac12))^{2}}\Big)^r  
 -\Big(\frac{1}{(x+(m+\frac12))^{2}}\Big)^r - \Big(\frac{1}{(x-(m+\frac12))^{2}}\Big)^r\right]
 \\ & \hskip11.8cm\times \pi^{-2r}\big(\cos^{2}(\pi x)\big)^r.
\end{align*}
We must show that this function has a global maximum on $[-\frac12,\frac12]$ at $x=0$. Since by Lemma~\ref{lem:6} ${\cos(\pi x)}/(1-4x^2)$ has a global maximum at $x=0$, it suffices to prove that 
\begin{equation*}
\left[((a+x)^{-2}+(a-x)^{-2})^r-(a+x)^{-2r}-(a-x)^{-2r}\right]\cdot (1-4x^2)^{2r}
\end{equation*}
has a global maximum at $x=0$, where we have written $a:=m+\frac{1}{2}\geq \tfrac{3}{2}$. Since the 
function $x\mapsto x^r$ is convex, we have $\tfrac{1}{2}(a+x)^{-2r}+\tfrac{1}{2}(a-x)^{-2r}\geq 
(\tfrac{1}{2}(a+x)^{-2}+\tfrac{1}{2}(a-x)^{-2})^r$ and hence 
$$2^{1-r}((a+x)^{-2}+(a-x)^{-2})^r-(a+x)^{-2r}-(a-x)^{-2r}\leq 0$$
with equality for $x=0$. Therefore, it suffices to show that
\begin{equation*}
(1-2^{1-r})((a+x)^{-2}+(a-x)^{-2})^r(1-4x^2)^{2r}
\end{equation*}
has a global maximum at $x=0$. It is enough to show that $g(x):=((a+x)^{-2}+(a-x)^{-2})(1-4x^2)^2$ 
is decreasing on $[0,\tfrac{1}{2}]$.

Computing the derivative of $g$ we find
\begin{align*}
g'(x)&=-16x(1-4x^2)((a+x)^{-2}+(a-x)^{-2})+(1-4x^2)^2(-2(a+x)^{-3}+2(a-x)^{-3})\\
&=(1-4x^2)(a+x)^{-3}(a-x)^{-3}k(x),
\end{align*}
where 
\begin{align*}
k(x)&=-16x(a^2-x^2)((a+x)^2+(a-x)^2)+(1-4x^2)(2(a+x)^3-2(a-x)^3)\\
&=-16x\cdot2(a^4-x^4)+(1-4x^2)\cdot 4x\cdot (3a^2+x^2)\\
&=4x[-8(a^4-x^4)+(1-4x^2)(3a^2+x^2)]\\
&=4x[4x^4+(1-12a^2)x^2+(3a^2-8a^4)]. 
\end{align*}

Since $a>\sqrt{\tfrac{3}{8}}$, the function $p(y):=4y^2+(1-12a^2)y+(3a^2-8a^4)$ has a positive and a 
negative root. The sum of the two roots equals $\tfrac{12a^2-1}{4}$ and therefore the positive root 
is larger than $3a^2-\tfrac{1}{4}\geq \tfrac{26}{4}$. It follows that $p$ is negative on $[0,\tfrac{1}{4}]$ 
and hence $g'(x)=(1-4x^2)(a+x)^{-3}(a-x)^{-3}\cdot4x\cdot p(x^2)\leq 0$ on $[0,\tfrac{1}{2}]$, which finishes the proof.
\end{proof}

\bigskip\noindent
{\bf Added in proof}. 
After this paper had been accepted for publication, Tom Koornwinder sent us the following 
interesting proof for the case that the parameter $r$ in Proposition~\ref{prop} is integral. 
With his kind permission we reproduce it here.

We consider $f_r(x)$ on $(0,1)$. In terms of the Hurwitz zeta-function $\zeta(s,q)$ (see \cite[Eq. 25.11.1]{nist}) we have 
\begin{equation*}
f_r(x)=\pi^{-2r} \sin^{2r}(\pi x) (\zeta(2r,x)+\zeta(2r,1-x)),\quad r=1,2,\ldots
\end{equation*}
In terms of the digamma function $\psi(z) = \Gamma'(z)/\Gamma(z)$ (see \cite[Eq. 25.11.12]{nist}) this 
can be rewritten as 
\begin{equation*}
f_r(x)=\frac{\pi^{-2r} \sin^{2r}(\pi x)}{(2r-1)!} \ddx^{\!2r-1}(\psi(x)-\psi(1-x)).
\end{equation*}
Applying the reflection formula $\psi(1-z)-\psi(z)=\pi\cot(\pi z)$ (see \cite[Eq. 5.5.4]{nist}) we obtain

\begin{equation*}
f_r(x)=\frac{-\pi^{1-2r} \sin^{2r}(\pi x)}{(2r-1)!} \ddx^{\!2r-1}\cot(\pi x).
\end{equation*}
Substitution of $t=\pi x$ simplifies this expression to 
\begin{equation*}
\frac{(2r-1)!}{\sin^{2r}t}\cdot f_r(t/\pi)  = - \ddt^{\!2r-1} \cot t.
\end{equation*}

Since $(d/dt)\cot t=-1/\sin^2 t$, we have $f_1(t/\pi)=1$. Also, we obtain the following recursion relation:
\begin{equation}
\frac{(2r+1)!}{\sin^{2r+2} t }\cdot f_{r+1}(t/\pi) =  -  \ddt^{\!2r+1} \cot t
\       =\ (2r-1)!\,  \ddt^{\!2} \frac{f_r(t/\pi)}{\sin^{2r}t}.\label{recursion2}
\end{equation}
A small computation shows that 
\begin{align*}
\ddt^{\!2} \frac{f_r(t/\pi)}{\sin^{2r}t}&=\ddt\left[(\sin^{-2r} t)\ddt f_r(t/\pi)-2r(\cos t)(\sin^{-2r-1} t)f_r(t/\pi)\right]&\\
&=(\sin^{-2r} t)\ddt^{\!2} f_r(t/\pi)-4r(\cos t)(\sin^{-2r-1} t)\ddt f_r(t/\pi)\\
&\qquad\qquad\, +\left(2r(2r+1)\cos^2 t\sin^{-2r-2} t+2r\sin^{-2r} t\right)\cdot f_r(t/\pi).
\end{align*}
Hence, (\ref{recursion2}) implies that 
\begin{align}\label{recursion3}
\frac{(2r+1)!}{(2r-1)!}f_{r+1}(t/\pi)=(\sin^2 t)\ddt^{\!2} f_r(t/\pi)-4r(\cos t)(\sin t)\ddt f_r(t/\pi)&\nonumber\\
+(2r(2r+1)\cos^2 t+2r\sin^2 t)\cdot f_r(t/\pi)&.
\end{align}
Set $y:=\cos^2 t$ and $D:=d/dy$. So $d/dt=-2(\sin t\cos t)D$ and 
\begin{align*}\ddt^{\!2}&=\frac{d}{dt}(-2\sin t\cos t)D\\
&=-2(\cos^2 t-\sin^2 t)D -(2\sin t\cos t)\ddt D\\
&=-2(\cos^2 t-\sin^2 t)D +(4\sin^2 t\cos^2 t) D^2\\
&=(-4y+2)D+4y(1-y)D^2.
\end{align*}
Equation (\ref{recursion3}) can therefore be rewritten as 
\begin{align}\frac{(2r+1)!}{(2r-1)!}\, f_{r+1}(t/\pi)
      &  =  \left[4y(1-y)^2 D^2+((8r-4)y+2)(1-y) D + 2r(2ry+1)\right] f_r(t/\pi)\nonumber\\
 &  = \left[4y(r-yD)^2 + 8(r-yD)yD+2yD+2r + 4yD^2+2D\right] f_r(t/\pi).\label{recursion4}
\end{align}
Observe that $(r-yD)y^k=(r-k)y^k$. Hence, if $p=p(y)$ is a polynomial of degree $n<r$ with nonnegative coefficients, then the same holds for $(r-yD)p$. The recursion (\ref{recursion4}) and the fact that $f_1(t/\pi)=1$ now imply that $f_r(t/\pi)$ is a polynomial in $y$ of degree $r-1$ with nonnegative coefficients. 
The first few are given explicitly by
\begin{align*}
f_1(t/\pi) & = 1  \\ 
f_2(t/\pi) & = \frac13 + \frac23 \cos^2 t \\
f_3(t/\pi) & = \frac2{15} + \frac{11}{15} \cos^2 t + \frac2{15} \cos^4 t \\
f_4(t/\pi) & = \frac{17}{315} + \frac{4}{7} \cos^2 t + \frac{38}{105} \cos^4 t + \frac{4}{315} \cos^6 t \\
f_5(t/\pi) & = \frac{62}{2835} + \frac{1072}{2835} \cos^2 t + \frac{484}{945} \cos^4 t + \frac{247}{2835} \cos^6 t  + \frac{2}{2835} \cos^8 t    
\end{align*}
For integers $r$, Proposition~\ref{prop} is an immediate consequence.

For half-integers $r = n+\frac12$ one could observe that the identity
$$ \psi^{(2n)}(x) = - (2n)! \sum_{m=0}^\infty \frac1{(x+m)^{2n+1}}$$
allows one to express the inequality of Proposition~\ref{prop} in terms of
the polygamma functions $ \psi^{(2n)}$. We have not been able, however, to use this fact to give a simpler proof in that case.


\begin{thebibliography}{9}

\bibitem{Bou}{J. Bourgain, 
Vector-valued Hausdorff-Young inequalities and applications. 
In: Geometric aspects of functional analysis (1986/87), 239--249,
Lecture Notes in Math., Vol. 1317, Springer, Berlin, 1988.}
 
\bibitem{Gar}{J. Garcia-Cuerva, K.S. Kazarian, V.I. Kolyada, \& J.L. Torrea, 
Vector-valued Hausdorff-Young inequality and applications. 
Russian Mathematical Surveys, {\bf 53}(3) (1998), 435--513.} 

\bibitem{Kon}{H. K\"onig, 
On the Fourier-coefficients of vector-valued functions.
Math. Nachr. {\bf 152} (1991), 215--227.}
 
\bibitem{Kwa}{S. Kwapie\'n, 
Isomorphic characterizations of inner product spaces by orthogonal series with vector valued coefficients. 
Studia Math. {\bf 44} (1972), 583--595.}

\bibitem{Pee}{J. Peetre, 
Sur la transformation de Fourier des fonctions \`a valeurs vectorielles.
Rend. Sem. Mat. Univ. Padova {\bf 42} (1969), 15--26.} 

\bibitem{nist}{{\tt http://dlmf.nist.gov}}
\end{thebibliography}
\end{document}